\newcommand{\diag}{\operatorname{diag}}
\newcommand{\vect}{\operatorname{vec}}
\newcommand{\trace}{\operatorname{tr}}
\newcommand{\expn}{\operatorname{e}}
\newcommand{\R}{\mathbb{R}}
\newcommand{\matlab}{MATLAB\textsuperscript{\textregistered}  }
 \newcommand{\intel}{Intel\textsuperscript{\textregistered}}
\newcommand{\xeon}{Xeon\textsuperscript{\textregistered}}
\newtheorem{defn}{Definition}[section]
\newtheorem*{bem}{Remark}
\newtheorem{lem}[defn]{Lemma}
\newtheorem{thm}[defn]{Theorem}
\begin{document}

\title{An $\mathcal H_2$-Type Error Bound for Time-Limited Balanced Truncation}

\author{Martin Redmann\thanks{Corresponding author. Weierstrass Institute for Applied Analysis and Stochastics, Mohrenstr. 39, 10117 Berlin, Germany, 
Email: {\tt martin.redmann@wias-berlin.de}. Financial support by the DFG via Research Unit FOR 2402 is gratefully acknowledged.} \and Patrick 
K{\"u}rschner\thanks{Max Planck Institute for Dynamics of Complex Technical Systems, 
Sandtorstr.\ 1, 39106 Magdeburg, Germany, Email: {\tt kuerschner@mpi-magdeburg.mpg.de}}}

\maketitle

\begin{abstract}                         
When solving partial differential equations numerically, usually a high order spatial discretization is needed.
Model order reduction (MOR) techniques are often used to reduce the order of spatially-discretized systems and hence reduce 
computational complexity. A particular MOR technique to obtain a reduced order model (ROM) is balanced truncation (BT). However, if one aims at 
finding a good ROM on a certain finite time interval only, time-limited BT (TLBT) can be a more accurate alternative. So far, no error bound on TLBT 
has been proved. In this paper, we close this gap in the theory by providing an $\mathcal H_2$ error bound for TLBT with two different 
representations. The performance of the error bound is then shown in several numerical experiments.
\end{abstract}
\textbf{Keywords:} Model reduction; linear systems; time-limited balanced truncation; time-limited Gramians; error bound. 

\noindent\textbf{MSC classification:} 93A15, 93B99, 93C05, 93C15, 93D20.

\setlength{\parindent}{0pt} 
   \section{Introduction}
  Let $(A,B,C)\in\mathbb{R}^{n\times n}\times\mathbb{R}^{n\times m}\times\mathbb{R}^{p\times m}$ be a realization of a linear, time-invariant system
\begin{align}\label{lti}
\boldsymbol{\Sigma}:\quad \dot x(t)=Ax(t)+Bu(t),\quad x(0)=0,\quad y(t)=Cx(t)
\end{align}
and assume that $A$ is Hurwitz which implies \eqref{lti} is asymptotically stable. 
The infinite reachability and observability Gramians
\begin{align*}
	P_{\infty}=\int_0^{\infty} \expn^{As}BB^T \expn^{A^Ts} ds,\quad  Q_{\infty}=\int_0^{\infty} \expn^{A^Ts}C^TC \expn^{As} ds
\end{align*}
of $(A,B,C)$ solve the Lyapunov equations
\begin{align}\label{BT_Lyap}
AP_{\infty}+P_{\infty}A^T+BB^T=0,\quad A^TQ_{\infty}+Q_{\infty}A^T+C^TC=0.
\end{align}
The first ingredient of balanced truncation~\cite{moore} (BT) is to
simultaneously diagonalize both Gramians through congruence transformations $\hat S P_{\infty} \hat S^T=\hat S^{-T}Q_{\infty}\hat 
S^{-1}=\Sigma_{\infty}$  which gives a balanced realization $(\hat S A \hat S^{-1}, \hat SB, C\hat S^{-1})$, where $\Sigma_{\infty}$ is diagonal and 
contains the Hankel singular values $\sigma_j$ (HSVs),
i.e., the square root of the eigenvalues of $P_{\infty}Q_{\infty}$. In the second step the reduced order model $\boldsymbol{\Sigma}_r$ is obtained by 
keeping only the $r\times r$ upper left block of $\hat S A \hat S^{-1}$ and the associated parts 
of $\hat SB, C\hat S^{-1}$, i.e., the smallest $n-r$ HSVs are removed from the system. With Cholesky factorizations $P_{\infty}=L_PL_P^T$, 
$Q_{\infty}=L_QL_Q^T$, and the singular value decomposition (SVD) 
$X\Sigma_{\infty}Y^T=L_Q^TL_P$, the balancing 
transformation is given by $\hat S=L_QX\Sigma_{\infty}^{-\tfrac{1}{2}}$ and $\hat S^{-1}=L_PY\Sigma_{\infty}^{-\tfrac{1}{2}}$, see, 
e.g.,~\cite{antoulas}. This leads to non increasingly ordered $\sigma_j$. Moreover, the resulting reduced system $\boldsymbol{\Sigma}_r$ is 
asymptotically stable and satisfies the $\mathcal H_{\infty}$ error bound~\cite{morGlo84}
 \begin{align}\label{BT_errorB}
  \|\boldsymbol{\Sigma}-\boldsymbol{\Sigma}_r\|_{\mathcal H_{\infty}}\leq2(\sigma_{r+1}+\ldots+\sigma_n).
 \end{align}
Once the SVD is computed,~\eqref{BT_errorB} can be used to adaptively adjust the reduced order $r$. A generalized $\mathcal 
H_{\infty}$-error bound for BT has been proved in \cite{bennerdammcruz, dammbennernewansatz}, where linear stochastic system are investigated.\\
The matrix of truncated HSVs 
$\Sigma_2=\diag(\sigma_{r+1}, \ldots, \sigma_n)$ can be used to express the $\mathcal{H}_2$ error bound \cite{antoulas}. It is represented by 
\begin{align}\label{h2_bound_infty}
  \|\boldsymbol{\Sigma}-\boldsymbol{\Sigma}_r\|^2_{\mathcal H_{2}}\leq\trace(\Sigma_{2} (B_2 B_2^T+2 P_{\infty, M, 2} A_{21}^T)),\end{align}
where $B_2$ is the matrix of the last $n-r$ rows of $\hat S B$, $A_{21}$ is the left lower $(n-r)\times r$ block of $\hat S A \hat S^{-1}$ and 
$P_{\infty, M, 2}$ are the last $n-r$ rows of the mixed Gramian $P_{\infty, M}=\hat S \int_0^{\infty} \expn^{As}BB_1^T \expn^{A_{11}^Ts} ds$. 
The bound in (\ref{h2_bound_infty}) has already been extended to stochastic systems in a more general form \cite{redmannbenner, 
redmannfreitag, BTtyp2EB}.

In~\cite{morGawJ90} Gawronski and Juang restricted balanced truncation to a finite time interval $[0,\bar T]$, $\bar T<\infty$, by
introducing the time-limited reachability and observability Gramians
\begin{align}\label{TLBT_Gram}
	P_{\bar T}:=\int_0^{\bar T} \expn^{As}BB^T \expn^{A^Ts} ds,\quad  Q_{\bar T}=\int_0^{\bar T} \expn^{A^Ts}C^TC \expn^{As} ds.
\end{align}
It is easy to show that $P_{\bar T},~Q_{\bar T}$ solve the Lyapunov equations
\begin{align}\label{TLBT_Lyap}
AP_{\bar T}+P_{\bar T}A^T+BB^T-F_{\bar T}F_{\bar T}^T&=0,\\
A^TQ_{\bar T}+Q_{\bar T}A^T+C^TC-G_{\bar T}^TG_{\bar T}&=0,
\end{align}
where $G_t:=C \expn^{At}$ and $F_t:=\expn^{At}B$, $t\in[0, {\bar T}]$. Time-limited balanced truncation (TLBT) is then carried out by
using the Cholesky factors of $P_{\bar T}$, $Q_{\bar T}$ instead of $P_{\infty}, Q_{\infty}$ to construct the balancing transformation 
which in this case is denoted by $S$. 
This transformation simultaneously diagonalizes $P_{\bar T}$, $Q_{\bar T}$, i.e., $S P_{\bar T} S^T=S^{-T}Q_{\bar 
T}S^{-1}=\Sigma_{\bar T}$ and is, thus, referred to as time-limited balancing transformation. The values in $\Sigma_{\bar T}$ are referred to as 
time-limited singular values and are, similar to the HSVs, invariant under state-space transformations. Because of the altered Gramian definitions, 
TLBT does generally not preserve stability and there is no $\mathcal H_{\infty}$ error bound as in unrestricted BT.\smallskip

The main contribution of this paper is a generalized $\mathcal H_2$ error bound for TLBT. It leads to (\ref{h2_bound_infty}) if $\bar 
T\rightarrow \infty$. We provide two representations of this bound. The first one can be used for practical computations and is, hence, an important 
tool to assess the obtained accuracy. The second representation is not appropriate for computing the bound but it shows
that, similar to BT, the 
time-limited singular values deliver an alternative criterion to find a suitable reduced order dimension $r$. We conclude this paper
by conducting several numerical 
experiments which indicate that the time-limited $\mathcal H_2$ bound is tight.
 \section{$\mathcal H_2$-type Error Bounds for Time-Limited Balanced Truncation}\label{sec2}
Let $S$ be the time-limited balancing transformation. We partition the balanced realization $(S A 
S^{-1}, 
SB, CS^{-1})$ as follows: \begin{align*}
S A S^{-1}= \begin{bmatrix}{A}_{11}&{A}_{12}\\ 
{A}_{21}&{A}_{22}\end{bmatrix},\;\;\; S B = \begin{bmatrix}{B}_1\\ {B}_2\end{bmatrix},\;\;\;  
C S^{-1} = \begin{bmatrix}{C}_1 &{C}_2\end{bmatrix}, \end{align*}
where ${A}_{11}\in\R^{r\times r}$, $B_{1}\in\R^{r\times m}$, $C_{1}\in\R^{p\times r}$ and the other blocks of appropriate dimensions. Furthermore, we 
introduce    
{\allowdisplaybreaks \begin{align*}
S F_{\bar T}=\begin{bmatrix}
F_{{\bar T}, 1}\\
F_{{\bar T}, 2}\end{bmatrix},\;G_{\bar T} S^{-1}=\begin{bmatrix}
 G_{{\bar T}, 1}& G_{{\bar T}, 2}\end{bmatrix},\;\Sigma_{\bar T}=\begin{bmatrix}
 \Sigma_{{\bar T}, 1}& \\
 & \Sigma_{{\bar T}, 2}
\end{bmatrix}.\end{align*}}
 We consider the corresponding Lyapunov equations in partitioned form:
{\allowdisplaybreaks \begin{align} 
\label{donotusenewgram}
\left[\begin{smallmatrix}{A}_{11}&{A}_{12}\\ 
{A}_{21}&{A}_{22}\end{smallmatrix}\right] \left[\begin{smallmatrix}\Sigma_{{\bar T}, 1}&\\ 
&\Sigma_{{\bar T}, 2}\end{smallmatrix}\right]+\left[\begin{smallmatrix}\Sigma_{{\bar T}, 1} &\\ 
&\Sigma_{{\bar T}, 2}\end{smallmatrix}\right] \left[\begin{smallmatrix} A^T_{11}&A^T_{21}\\ 
A^T_{12}& A^T_{22} \end{smallmatrix}\right]=& -\left[\begin{smallmatrix} {B}_1 B_1^T& {B}_1 B_2^T \\ 
{B}_2 B_1^T& {B}_2 B_2^T \end{smallmatrix}\right]\\\nonumber
&+\left[\begin{smallmatrix} F_{\bar T,1} F_{\bar T,1}^T& F_{\bar T,1} F_{\bar T,2}^T \\ 
F_{\bar T,2}F_{\bar T,1}^T& F_{\bar T,2}F_{\bar T,2}^T \end{smallmatrix}\right], \\ \label{ersteeqbt2ob}
\left[\begin{smallmatrix}{A}^T_{11}&{A}^T_{21}\\ 
{A}^T_{12}&{A}^T_{22}\end{smallmatrix}\right]\left[\begin{smallmatrix}\Sigma_{{\bar T}, 1}&\\ 
&\Sigma_{{\bar T}, 2}\end{smallmatrix}\right]+\left[\begin{smallmatrix}\Sigma_{{\bar T}, 1}&\\ 
&\Sigma_{{\bar T}, 2}\end{smallmatrix}\right]\left[\begin{smallmatrix}A_{11}&A_{12}\\ 
A_{21}& A_{22}\end{smallmatrix}\right]=& -\left[\begin{smallmatrix}C_1^TC_1&C_1^TC_2 \\ 
C^T_2C_1& C^T_2C_2 \end{smallmatrix}\right]\\\nonumber
&+\left[\begin{smallmatrix} G_{\bar T,1}^T G_{\bar T,1}& G_{\bar T,1}^TG_{\bar T,2} \\ 
G_{\bar T,2}^TG_{\bar T,1}& G_{\bar T,2}^TG_{\bar T,2} \end{smallmatrix}\right].
\end{align}}
The TLBT reduced system that approximates (\ref{lti}) is given by
\begin{align*}
\dot x_r(t)=A_{11}x_r(t)+B_1u(t),\quad x_r(0)=0,\quad y_r(t)=C_1 x_r(t).
\end{align*}
The goal of this section is to find a bound for the error between $y$ and $y_r$. Since we have zero initial conditions for both the reduced and the 
full system, we have the following representations for the outputs {\allowdisplaybreaks \begin{align*}
 y(t)&=C x(t)=C \int_0^t \expn^{A(t-s)} B u(s) ds, \\
y_r(t)&=C_1 x_r(t)=C_1 \int_0^t  \expn^{A_{11}(t-s)} B_1 u(s) ds,
\end{align*}}
where $t\in [0, {\bar T}]$. To find a first representation for the error bound, arguments from \cite{redmannbenner, 
redmannfreitag, BTtyp2EB} are used. There a generalized $\mathcal H_2$ error bound for stochastic systems has been derived. Some easy rearrangements 
yield a first error estimate  {\allowdisplaybreaks \begin{align*}
&\left\|y(t)- y_r(t)\right\|_2 \\&=\left\|C \int_0^t \expn^{A(t-s)} B u(s) ds- C_1 \int_0^t  \expn^{A_{11}(t-s)} B_1 u(s) ds\right\|_2\\&\leq  
\int_0^t \left\|\left(C \expn^{A(t-s)}  B - C_1 \expn^{A_{11}(t-s)} B_1\right) u(s)\right\|_2 ds\\&\leq \int_0^t \left\| C 
\expn^{A(t-s)} B - C_1 \expn^{A_{11}(t-s)} B_1\right\|_F \left\|u(s)\right\|_2 ds.
\end{align*}}
By the Cauchy Schwarz inequality it holds that{\allowdisplaybreaks \begin{align*}
&\left\|y(t)- y_r(t)\right\|_2\\&\leq 
\left(\int_0^t \left\|C \expn^{A(t-s)} B - C_1 \expn^{A_{11}(t-s)} B_1\right\|_F^2 ds\right)^{\frac{1}{2}} \left(\int_0^t 
\left\|u(s)\right\|_2^2 ds\right)^{\frac{1}{2}}.
\end{align*}}
Using substitution, the definition of the Frobenius norm and the linearity of the integral, we obtain 
{\allowdisplaybreaks \begin{align*}
 & \int_0^t \left\| C \expn^{A(t-s)} B - C_1 \expn^{A_{11}(t-s)} B_1\right\|_F^2 ds\\&=\int_0^t \left\|C \expn^{As} B - C_1 \expn^{A_{11}s} 
B_1\right\|_F^2 ds \\&\leq \int_0^{\bar T} \left\|C \expn^{As} B - C_1 \expn^{A_{11}s} B_1\right\|_F^2 ds\\&=\int_0^{\bar T} \trace\left(C 
\expn^{As}BB^T 
\expn^{A^Ts} C^T\right)ds\\&\quad +\int_0^{\bar T}\trace\left(C_1 \expn^{A_{11}s}B_1B_1^T\expn^{A_{11}^Ts}  C_1^T\right)ds\\&\quad 
-2\int_0^{\bar T}\trace\left(C 
\expn^{As}BB_1^T \expn^{A_{11}^Ts} C_1^T\right) ds\\&=\trace\left(C P_{\bar T} C^T\right)+\trace\left(C_1 P_{{\bar T}, r} 
C_1^T\right)-2\;\trace\left(C P_{{\bar T}, M} C_1^T\right),
\end{align*}}
where $P_{\bar T}:=\int_0^{\bar T}  \expn^{As}BB^T \expn^{A^Ts}ds$, $P_{{\bar T}, r}:=\int_0^{\bar T} \expn^{A_{11}s}B_1B_1^T\expn^{A_{11}^Ts} ds$ 
and 
$P_{{\bar T},M}:=\int_0^{\bar T} \expn^{As}BB_1^T \expn^{A_{11}^Ts} ds$. 
Matrix-valued integrals of this form can under some conditions be expressed as unique solutions of matrix equations.
\begin{lem}\label{timeint_mateq}
 Let $A_1\in\mathbb{R}^{n\times n},~A_2\in\mathbb{R}^{r\times r}$ with $\Lambda(A_1)\cap-\Lambda(A_2)=\emptyset$ and
$B_1\in\mathbb{R}^{n\times m}$, $B_2\in\mathbb{R}^{r\times m}$. Then,
\begin{align*}
 X=\int_{0}^{\bar T}\expn^{A_1s}B_1B_2^T \expn^{A_2^Ts}ds
\end{align*}
solves the Sylvester equation
\begin{align*}
 A_{1} X+X A_{2}^T &=-B_1 B_2^T+\expn^{A_1{\bar T}}B_1B_2^T\expn^{A_2^T{\bar T}}.
\end{align*}
\end{lem}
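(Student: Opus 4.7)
The plan is to directly verify that the given integral $X$ satisfies the Sylvester equation by differentiating the integrand and invoking the fundamental theorem of calculus. The spectral assumption $\Lambda(A_1)\cap -\Lambda(A_2)=\emptyset$ is only needed to secure uniqueness of the solution, not its existence (the integral exists trivially since the interval $[0,\bar T]$ is bounded).

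First I would compute, for fixed $s$,
\begin{equation*}
\frac{d}{ds}\bigl(\expn^{A_1 s} B_1 B_2^T \expn^{A_2^T s}\bigr)
= A_1 \expn^{A_1 s} B_1 B_2^T \expn^{A_2^T s} + \expn^{A_1 s} B_1 B_2^T \expn^{A_2^T s} A_2^T,
\end{equation*}
using the product rule together with the standard identities $\tfrac{d}{ds}\expn^{A_1 s}=A_1\expn^{A_1 s}$ and $\tfrac{d}{ds}\expn^{A_2^T s}=\expn^{A_2^T s}A_2^T$. Then integrating both sides over $[0,\bar T]$ and applying the fundamental theorem of calculus on the left, while pulling the constant matrices $A_1$ and $A_2^T$ out of the integrals on the right, yields
\begin{equation*}
\expn^{A_1\bar T} B_1 B_2^T \expn^{A_2^T\bar T} - B_1 B_2^T = A_1 X + X A_2^T,
\end{equation*}
which is exactly the claimed Sylvester identity after rearrangement.

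Finally, the spectral condition $\Lambda(A_1)\cap -\Lambda(A_2)=\emptyset$ guarantees that the Sylvester operator $X\mapsto A_1 X + X A_2^T$ is invertible, so $X$ is indeed \emph{the} unique solution rather than just one among many. There is no real obstacle here; the only subtlety is to be careful that $B_1 B_2^T$ is a constant matrix and commutes with the differentiation under the integral sign, which is justified by smoothness of the matrix exponentials and boundedness of the interval.
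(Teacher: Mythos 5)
Your proof is correct, but it takes a different route from the paper's own proof of Lemma~\ref{timeint_mateq}. The paper vectorizes: it writes $\vect X=\int_0^{\bar T}\expn^{A_2 s}\otimes\expn^{A_1 s}\,ds\;\vect B_1B_2^T=\int_0^{\bar T}\expn^{\mathcal{A}s}ds\;\vect B_1B_2^T$ with $\mathcal{A}=I_r\otimes A_1+A_2\otimes I_n$, uses the spectral condition to conclude $\mathcal{A}$ is nonsingular so that the integral equals $\mathcal{A}^{-1}(\expn^{\mathcal{A}\bar T}-I_{nr})$, multiplies back by $\mathcal{A}$, and de-vectorizes. Your argument --- differentiate $\expn^{A_1 s}B_1B_2^T\expn^{A_2^Ts}$ by the product rule and apply the fundamental theorem of calculus --- is precisely the alternative derivation the authors themselves record in the Remark immediately following the lemma (with $g_1(t)=\expn^{A_1 t}B_1$, $g_2(t)=B_2^T\expn^{A_2^Tt}$). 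Your route is more elementary and makes transparent a point the paper also emphasizes there: the integral satisfies the Sylvester identity regardless of the spectra, and the eigenvalue condition is only needed so that the Sylvester operator is invertible and $X$ is recovered \emph{uniquely} from the equation (which is what the paper relies on later for $P_{\bar T,r}$ and $P_{\bar T,M}$). The vectorization proof, by contrast, threads the invertibility of $\mathcal{A}$ through the derivation itself via the closed form of $\int_0^{\bar T}\expn^{\mathcal{A}s}ds$. Both are valid; yours buys simplicity and a cleaner separation of the existence and uniqueness issues, the paper's connects directly to the Kronecker-product formulation used in numerical treatments of such equations.
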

\begin{proof}
 The integral is equivalent to
 {\allowdisplaybreaks \begin{align*}
  \vect X&=\int_{0}^{\bar T}\vect\expn^{A_1s}B_1B_2^T \expn^{A_2^Ts}ds\\
  &=\int_{0}^{\bar T}\expn^{A_2s}\otimes \expn^{A_1s}ds\vect B_1B_2^T\\
  &=\int_{0}^{\bar T}\expn^{\left(I_r\otimes A_1 +A_2\otimes I_n \right)s}ds \vect B_1B_2^T,
 \end{align*}}
where we used \cite[Theorem~10.9]{Hig08}. The matrix $\mathcal{A}:=I_r\otimes A_1 +A_2\otimes I_n$ is nonsingular and it holds that
\begin{align*}
 \vect X&=\mathcal{A}^{-1}\left(\expn^{\mathcal{A}{\bar T}}-I_{nr}\right)\vect B_1B_2^T\\
 \Leftrightarrow\quad \mathcal{A}\vect X&=\left(\expn^{\mathcal{A}{\bar T}}-I_{nr}\right)\vect B_1B_2^T
\end{align*}
and the claim follows after de-vectorization.
\end{proof}
\begin{bem}
The result of the above Lemma is also a consequence of the product rule. Setting $g_1(t):=\expn^{A_1 t}B_1$ and $g_2(t):=B_2^T \expn^{A_2^T t}$, it 
holds 
that \begin{align*}
&g_1(\bar T)g_2(\bar T)-g_1(0)g_2(0)=\int_{0}^{\bar T}g_1(s)dg_2(s)+\int_{0}^{\bar T}d g_1(s) g_2(s)\\&= \int_{0}^{\bar T}g_1(s)g_2(s)ds\;A_2^T 
+A_1 \int_{0}^{\bar T} g_1(s) g_2(s) ds,
\end{align*}
since $dg_2(s)=g_2(s) A_2^T ds$ and $dg_1(s)=A_1 g_1(s) ds$.

The time-limited Gramians~\eqref{TLBT_Gram} also exists for unstable systems. Therefore, it is, e.g. in~\cite[Section 7.6.5]{antoulas}, discussed to 
use TLBT to
reduce unstable systems. The above Lemma further reveals that in this situation and if 
$\Lambda(A)\cap-\Lambda(A)=\emptyset$, the time-limited Gramians can still be obtained by solving the time-limited
Lyapunov equations~\eqref{TLBT_Lyap} which is important from a numerical point of view.
In this work, however, we will not pursue the reduction of unstable systems further.
\end{bem}
From now on we assume that $\Lambda(A_{11})\cap-\Lambda(A_{11})=\emptyset$ and $\Lambda(A)\cap-\Lambda(A_{11})=\emptyset$, implying by
Lemma~\ref{timeint_mateq} that
the matrices $P_{{\bar T}, r}$ and $P_{{\bar T}, M}$ are the unique solutions of 
{\allowdisplaybreaks 
\begin{subequations}\label{lyapeq_btbound}
\begin{align}\label{nonbalregramp}
A_{11} P_{{\bar T}, r}+P_{{\bar T},r} A_{11}^T &=-B_1 B_1^T+F_{{\bar T}, r} F_{{\bar T}, r}^T,\\ 
\label{mixeqbt2blsa} 
A P_{{\bar T}, M}+ P_{{\bar T}, M} A_{11}^T &=-B B_1^T+F_{{\bar T}} F_{{\bar T}, r}^T,                               
\end{align}
\end{subequations}
}
where $F_{{\bar T}, r}:=\expn^{A_{11}T}B_1$. We have, thus, established the following result.
\begin{thm}\label{thm_basic}
Let $\Lambda(A_{11})\cap-\Lambda(A_{11})=\emptyset$ and $\Lambda(A)\cap-\Lambda(A_{11})=\emptyset$. Then the following error bound holds for the 
reduced system
$\boldsymbol{\Sigma}_r$ generated by TLBT 
 {\allowdisplaybreaks \begin{align}\label{implicdeptilp}
\begin{split}
&\max_{t\in [0, {\bar T}]}\left\|y(t)- y_r(t)\right\|_2\leq\epsilon\left\|u\right\|_{L^2_{\bar T}},\\ 
&\epsilon:=\left(\trace \left(C P_{\bar T} C^T\right)+\trace \left(C_1 P_{{\bar T}, r} 
C_1^T\right)-2\trace \left(C P_{{\bar T},M} C_1^T\right)\right)^{\frac{1}{2}}.
\end{split}
\end{align}}
\end{thm}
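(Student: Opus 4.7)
The plan is to bound the output error pointwise in $t\in[0,\bar T]$ by a chain of standard norm inequalities and then convert the resulting matrix integrals into solutions of matrix equations via Lemma~\ref{timeint_mateq}. First I would write both $y$ and $y_r$ in variation-of-constants form, subtract, and pull the norm inside the $s$-integral using the triangle inequality. Since the integrand factors as a matrix times $u(s)$, I would estimate it via $\|Mv\|_2\leq\|M\|_F\|v\|_2$ with $M(\tau):=C\expn^{A\tau}B-C_1\expn^{A_{11}\tau}B_1$, and then apply Cauchy--Schwarz in $s$ to separate the kernel norm from the input energy, producing the factor $\|u\|_{L^2_{\bar T}}$.

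Next I would substitute $\tau=t-s$ and enlarge the integration interval from $[0,t]$ to $[0,\bar T]$, which is legitimate because the integrand is non-negative. Expanding $\|M(\tau)\|_F^2$ via $\|X\|_F^2=\trace(XX^T)$ yields three trace-integrals, and by linearity of the trace these collapse to $\trace(CP_{\bar T}C^T)+\trace(C_1 P_{{\bar T},r}C_1^T)-2\trace(CP_{{\bar T},M}C_1^T)$, with the three matrices $P_{\bar T}$, $P_{{\bar T},r}$, $P_{{\bar T},M}$ being exactly the integrals defined just before the theorem.

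Finally I would invoke Lemma~\ref{timeint_mateq} twice: once with $(A_1,A_2)=(A_{11},A_{11})$ to identify $P_{{\bar T},r}$ as the unique solution of~\eqref{nonbalregramp}, and once with $(A_1,A_2)=(A,A_{11})$ to identify $P_{{\bar T},M}$ as the unique solution of~\eqref{mixeqbt2blsa}. The two spectral disjointness hypotheses in the theorem statement are exactly what the lemma requires for uniqueness, so the matrices appearing in the bound are well-defined. Taking the maximum over $t\in[0,\bar T]$ and collecting constants yields~\eqref{implicdeptilp}.

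I do not anticipate a genuine obstacle: every step is a textbook norm estimate, and the identification of the integrals with matrix-equation solutions is handled entirely by Lemma~\ref{timeint_mateq}. The only subtlety worth highlighting is that the Cauchy--Schwarz step forces the $L^2$-norm of $u$ to be taken over the full interval $[0,\bar T]$ rather than only over $[0,t]$, which is precisely why the resulting constant $\epsilon$ is independent of $t$ and the estimate can be stated uniformly in the $\max_t$ form.
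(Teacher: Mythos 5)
Your proposal is correct and follows essentially the same route as the paper: the chain triangle inequality $\to$ Frobenius-norm estimate $\to$ Cauchy--Schwarz $\to$ substitution and enlargement of the integration interval to $[0,\bar T]$ $\to$ expansion into the three trace terms is exactly the derivation the paper carries out immediately before stating Theorem~\ref{thm_basic}, with Lemma~\ref{timeint_mateq} invoked under the same two spectral-disjointness hypotheses to identify $P_{\bar T,r}$ and $P_{\bar T,M}$ as solutions of~\eqref{nonbalregramp} and~\eqref{mixeqbt2blsa}. Your closing remark about the $L^2$-norm of $u$ being taken over all of $[0,\bar T]$ so that $\epsilon$ is uniform in $t$ is also the right observation.
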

The representation (\ref{implicdeptilp}) of the error bound has the same structure as the one computed in the stochastic framework 
\cite{redmannbenner, redmannfreitag, BTtyp2EB} but it is clearly different since solutions of different matrix equations enter in the time-limited 
case. The bound in (\ref{implicdeptilp}) can be used for practical computations. It only requires to solve the matrix equations in 
(\ref{lyapeq_btbound}) since 
$P_{\bar T}$ is already known from the balancing procedure. The matrix equations (\ref{lyapeq_btbound}) are not expensive since 
$P_{{\bar T}, r}$ usually is a small matrix and $P_{{\bar T},M}$ only has a few columns.\smallskip

The next theorem provides an alternative representation of 
this bound. It can be expressed with the help of $\Sigma_{{\bar T}, 2}=\diag(\sigma_{{\bar T}, r+1}, \ldots, \sigma_{{\bar T}, n})$ which is the 
matrix of truncated time-limited singular values. In \cite{redmannbenner, redmannfreitag, BTtyp2EB} representations of 
generalized $\mathcal{H}_2$ error bounds have been shown using the truncated HSVs of the underlying stochastic system. However, the matrix equations 
(\ref{TLBT_Lyap}) and (\ref{lyapeq_btbound}) have a very different structure than the generalized equations for stochastic system. Therefore, we 
need to apply other techniques in order to obtain the result below. This result also shows essential differences in its structure compared to the 
stochastic case.
\begin{thm}\label{thmmaimn}
Using the coefficients of the balanced realization of the system, the error bound in (\ref{implicdeptilp}) can be expressed as 
follows:{\allowdisplaybreaks \begin{align*}
& \trace\left(C P_{\bar T} C^T+ C_1 P_{{\bar T}, r} C_1^T-2 C P_{{\bar T}, M} C_1^T\right)\\=&\trace(\Sigma_{{\bar T}, 2} (B_2 B_2^T+2 P_{{\bar T}, 
M, 
2} A_{21}^T))-2\trace(G^T_{{\bar T}, 1} 
G_{\bar T} P_{{\bar T}, M})\\&+\trace(G^T_{{\bar T}, 1} G_{{\bar T}, 1} P_{{\bar T}, r})+\trace(F_{{\bar T}, 1} F^T_{{\bar T}, 1} \Sigma_{{\bar T}, 
1})\\&-\trace((F_{{\bar T}, 
1}-F_{{\bar T}, r}) (F_{{\bar T}, 1}-F_{{\bar T}, r})^T \Sigma_{{\bar T}, 1}),\end{align*}}
where $P_{{\bar T}, M, 2}$ are the last $n-r$ rows of $S P_{{\bar T}, M}$ with $S$ being the balancing transformation.
\begin{proof}
By selecting the left and right upper block of (\ref{ersteeqbt2ob}), we 
have{\allowdisplaybreaks \begin{align}\label{firstc1c2bt2jk}
A_{11}^T \Sigma_{{\bar T}, 1}+\Sigma_{{\bar T}, 1} A_{11} &=-C_1^TC_1+G_{{\bar T}, 1}^T G_{{\bar T},1}\\ \label{c1c2eqbt2}
A_{21}^T \Sigma_{{\bar T}, 2}+\Sigma_{{\bar T}, 1} A_{12} &=-C_1^T C_2+G_{{\bar T}, 1}^TG_{{\bar T},2}.
                                       \end{align}}
We introduce the reduced order system observability Gramian $Q_{{\bar T}, r}:=\int_0^{\bar T} \expn^{A^T_{11}s}C_1^T C_1\expn^{A_{11}s} ds$ which 
satisfies
{\allowdisplaybreaks \begin{align}\label{redgramobsbt2fzu}
   A_{11}^T Q_{{\bar T}, r}+Q_{{\bar T}, r} A_{11}  =-C_1^T C_1+G_{{\bar T}, r}^T G_{{\bar T}, r}
\end{align}}
with $G_{{\bar T}, r}:=C_1 \expn^{A_{11}{\bar T}}$. 
We make use of the integral representations of $P_{\bar T}$ and $Q_{\bar T}$ and apply properties of the trace. Hence, we have 
\begin{align*}
&\trace(C P_{\bar T} C^T)=\int_0^{\bar T} \trace(C \expn^{As}BB^T \expn^{A^Ts} C^T)ds\\&=\int_0^{\bar T} \trace(B^T \expn^{A^Ts} C^TC 
\expn^{As}B)ds=\trace(B^T 
Q_{\bar T} B).
\end{align*}
Using the balancing transformation $S$ and the partition of $SB$, we obtain \begin{align*}
\trace(B^TQ_{\bar T} B)&=\trace(B^TS^T 
S^{-T}Q_{\bar T}S^{-1}SB)=\trace(B^TS^T \Sigma_{\bar T}SB)\\&=\trace(B_1^T\Sigma_{\bar T, 1}B_1)+\trace(B_2^T \Sigma_{\bar T, 2} B_2).                
   \end{align*}
The partition of $CS^{-1}$ and $S P_{{\bar T}, M}=\left[\begin{smallmatrix}
P_{{\bar T}, M, 1} \\
P_{{\bar T}, M, 2}
\end{smallmatrix}\right]$ yield 
\begin{align*}
\trace(C P_{\bar T,M}C_1^T)&=\trace(C S^{-1} S P_{\bar T,M} C_1^T)\\&=\trace(C_1 P_{\bar T,M, 1} C_1^T)+\trace(C_2 P_{\bar T,M, 2} 
C_1^T).
\end{align*}
For $\epsilon$ in \eqref{implicdeptilp} this leads to 
\begin{align}\label{insertforebbt2fh}
\epsilon^2=&\trace(B_1^T \Sigma_{{\bar T},1} B_1)+\trace(B_2^T \Sigma_{{\bar T}, 2} B_2)+ 
\trace(C_1 P_{{\bar T}, r} C_1^T)\\ \nonumber&-2\trace(C_1 P_{{\bar T}, M, 1} C_1^T)-2\trace(C_2 P_{{\bar T}, M, 2} C_1^T).
          \end{align}
We insert equation (\ref{c1c2eqbt2}) which yields 
{\allowdisplaybreaks \begin{align*}
\trace(C_2 P_{{\bar T}, M, 2} C_1^T)&=\trace(P_{{\bar T},M, 2}  C_1^T C_2 )\\&=-\trace(P_{{\bar T}, M, 2} (A_{21}^T 
\Sigma_{{\bar T}, 2}+\Sigma_{{\bar T}, 1} A_{12}))\\&\quad+\trace(P_{{\bar T},M, 2}G_{{\bar T}, 1}^TG_{{\bar T},2} )\\&=-\trace(\Sigma_{{\bar T}, 2} 
P_{{\bar T}, M, 2} 
A_{21}^T)-\trace(\Sigma_{{\bar T}, 1} 
A_{12} P_{{\bar T}, M, 2})\\&\quad +\trace(G_{{\bar T}, 1}^TG_{{\bar T},2} P_{{\bar T},M, 2}).
          \end{align*}}
We multiply (\ref{mixeqbt2blsa}) with $S$ from the left and evaluate the resulting upper block of the equation: \begin{align*}
-A_{12} P_{{\bar T}, M, 2}=A_{11} P_{{\bar T}, M, 1}+P_{{\bar T}, M, 1} A_{11}^T+B_1 B_1^T-F_{{\bar T}, 1}F_{{\bar T}, r}^T.\\               
   \end{align*}
Hence, we have {\allowdisplaybreaks \begin{align*}
&-2\trace(C_2 P_{{\bar T}, M, 2} C_1^T)=\\&2[\trace( \Sigma_{{\bar T}, 1}F_{{\bar T}, 1}F_{{\bar T}, r}^T)-\trace( \Sigma_{{\bar T}, 1} (B_1 
B_1^T+A_{11} P_{{\bar T}, 
M, 1}+ P_{{\bar T}, M, 1} 
A_{11}^T))]\\&+2[\trace(\Sigma_2 P_{{\bar T}, M, 2} A_{21}^T)-\trace(G_{{\bar T}, 1}^TG_{{\bar T},2} P_{{\bar T},M, 2})].
          \end{align*}}
Using equation (\ref{firstc1c2bt2jk}), we obtain  {\allowdisplaybreaks\begin{align*}
\trace(\Sigma_{{\bar T},1} (A_{11} P_{{\bar T}, M, 1}+P_{{\bar T}, M, 1} A_{11}^T))&=\trace(P_{{\bar T}, M, 1}(\Sigma_{{\bar T}, 1} A_{11}+ A_{11}^T 
\Sigma_{{\bar T}, 
1}))\\ 
&=\trace(P_{{\bar T}, M, 1}(G_{{\bar T}, 1}^T G_{{\bar T}, 1}- C_1^T C_1)),
          \end{align*}}
so that{\allowdisplaybreaks\begin{align*}
&-2\trace(C_2 P_{{\bar T},M, 2} C_1^T)\\&=2[\trace(\Sigma_{{\bar T},2} P_{{\bar T}, M, 2} A_{21}^T)-\trace(B_1^T\Sigma_{{\bar T}, 1} B_1)+\trace(C_1 
P_{{\bar T}, M, 
1} C_1^T)]\\&\quad +2[\trace(\Sigma_{{\bar T}, 1}F_{{\bar T}, 1}F_{{\bar T}, r}^T)-\trace(G_{{\bar T}, 1}^TG_{{\bar T}} P_{{\bar T},M})].
          \end{align*}}
Inserting this result into equation (\ref{insertforebbt2fh}) provides{\allowdisplaybreaks\begin{align*}
   \epsilon^2=&\trace(\Sigma_{{\bar T}, 2} (B_2 B_2^T+2 P_{{\bar T}, M, 2} A_{21}^T))\\&+2[\trace(\Sigma_{{\bar T}, 1}F_{{\bar T}, 1}F_{{\bar T}, 
r}^T)-\trace(G_{{\bar T}, 1}^TG_{{\bar T}} 
P_{{\bar T},M})]\\&+\trace(C_1 P_{{\bar T}, r} C_1^T)-\trace(B_1^T \Sigma_{{\bar T},1} B_1).
          \end{align*}}
With the integral representations of $P_{{\bar T}, r}$ and $Q_{{\bar T}, r}$ it holds that 
\begin{align*}
&\trace(C_1 P_{{\bar T}, r} C_1^T)=\int_0^{\bar T} \trace(C_1 \expn^{A_{11}s}B_1B_1^T \expn^{A_{11}^Ts} C_1^T)ds\\&=\int_0^{\bar T} \trace(B_1^T 
\expn^{A_{11}^Ts} 
C_1^TC_1 \expn^{A_{11}s}B_1)ds=\trace(B_1^T Q_{{\bar T}, r} B_1).
\end{align*}
So, we have {\allowdisplaybreaks \begin{align*}
\trace(C_1 P_{{\bar T}, r} C_1^T)-\trace(B_1^T \Sigma_{{\bar T}, 1} B_1)=\trace(B_1 B_1^T(Q_{{\bar T}, r}-\Sigma_{{\bar T}, 1})).
          \end{align*}}
Combining equations (\ref{firstc1c2bt2jk}) and (\ref{redgramobsbt2fzu}), we have
\begin{align}\label{difQRSig}
   A_{11}^T (Q_{{\bar T}, r}-\Sigma_{{\bar T}, 1})+(Q_{{\bar T}, r}-\Sigma_{{\bar T}, 1}) A_{11}  =G_{{\bar T}, r}^T G_{{\bar T}, r}-G_{{\bar T}, 
1}^T 
G_{{\bar T},1}.
\end{align}
Inserting (\ref{nonbalregramp}) and (\ref{difQRSig}) gives {\allowdisplaybreaks\begin{align*}
&\trace(C_1 P_{{\bar T}, r} C_1^T)-\trace(B_1^T \Sigma_{{\bar T}, 1} B_1)\\&=-\trace((A_{11} P_{{\bar T}, r}+P_{{\bar T}, r} A_{11}^T-F_{{\bar T}, r} 
F_{{\bar T}, 
r}^T)(Q_{{\bar T}, 
r}-\Sigma_{{\bar T}, 1}))\\&=-\trace(P_{{\bar T}, r}((Q_{{\bar T}, r}-\Sigma_{{\bar T}, 1})A_{11}+A_{11}^T(Q_{{\bar T}, r}-\Sigma_{{\bar 
T},1})))\\&\quad +\trace(F_{{\bar T}, r} F_{{\bar T}, 
r}^T(Q_{{\bar T}, r}-\Sigma_{{\bar T}, 1}))\\&=\trace(P_{{\bar T}, r}(G_{{\bar T}, 1}^T G_{{\bar T},1}-G_{{\bar T}, r}^T G_{{\bar T}, 
r}))+\trace(F_{{\bar T}, r} F_{{\bar T}, 
r}^T(Q_{{\bar T}, r}-\Sigma_{{\bar T}, 1})).
          \end{align*}}
Using again the integral representations of $P_{{\bar T}, r}$ and $Q_{{\bar T}, r}$, we see that {\allowdisplaybreaks \begin{align*}
          \trace(P_{{\bar T}, r}G_{{\bar T}, r}^T G_{{\bar T},r})&=\int_0^{\bar T} \trace(\expn^{A_{11}s}B_1B_1^T \expn^{A_{11}^Ts} 
\expn^{A^T_{11}{\bar T}}C_1C_1^T\expn^{A_{11}{\bar T}})ds 
\\& =\int_0^{\bar T} \trace(C_1^T\expn^{A_{11}s}\expn^{A_{11}{\bar T}}B_1B_1^T \expn^{A^T_{11}{\bar T}}\expn^{A_{11}^Ts}C_1)ds \\& =\int_0^{\bar T} 
\trace(B_1^T \expn^{A^T_{11}{\bar T}}\expn^{A_{11}^Ts}C_1 C_1^T\expn^{A_{11}s}\expn^{A_{11}{\bar T}}B_1)ds\\&=\trace(F_{{\bar T}, r}^TQ_{{\bar T}, r} 
F_{{\bar T}, 
r})=\trace(F_{{\bar T}, r} F_{{\bar T}, r}^TQ_{{\bar T}, r}).                                                                           \end{align*}}
Hence, we have {\allowdisplaybreaks \begin{align*}
&\trace(C_1 P_{{\bar T}, r} C_1^T)-\trace(B_1^T \Sigma_{{\bar T}, 1} B_1)=\trace(P_{{\bar T}, r}G_{{\bar T}, 1}^T G_{{\bar T},1})-\trace(F_{{\bar T}, 
r} F_{{\bar T}, 
r}^T\Sigma_{{\bar T}, 1}).
\end{align*}}
The error bound $\epsilon^2$ then is{\allowdisplaybreaks\begin{align*}
   \epsilon^2=&\trace(\Sigma_{{\bar T}, 2} (B_2 B_2^T+2 P_{{\bar T}, M, 2} A_{21}^T))\\&+2[\trace(\Sigma_{{\bar T}, 1}F_{{\bar T}, 1}F_{{\bar T}, 
r}^T)-\trace(G_{{\bar T}, 1}^TG_{{\bar T}} 
P_{{\bar T},M})]\\&+\trace(P_{{\bar T}, r}G_{{\bar T}, 1}^T G_{{\bar T},1})-\trace(F_{{\bar T}, r} F_{{\bar T}, r}^T\Sigma_{{\bar T}, 1}).
          \end{align*}}
Since {\allowdisplaybreaks \begin{align*}
   &    2\trace(\Sigma_{{\bar T}, 1}F_{{\bar T}, 1}F_{{\bar T}, r}^T)=2\left\langle \Sigma_{{\bar T}, 1}^{\frac{1}{2}} F_{{\bar T}, r}, \Sigma_{{\bar 
T}, 1}^{\frac{1}{2}}F_{{\bar T}, 
1}\right\rangle_F\\=&\left\|\Sigma_{{\bar T}, 1}^{\frac{1}{2}} F_{{\bar T}, r}\right\|^2_F+\left\|\Sigma_{{\bar T}, 1}^{\frac{1}{2}} F_{{\bar T}, 
1}\right\|^2_F-\left\|\Sigma_{{\bar T}, 1}^{\frac{1}{2}} (F_{{\bar T}, 1}-F_{{\bar T}, r})\right\|^2_F,
      \end{align*}}
 we obtain {\allowdisplaybreaks\begin{align*}
   \epsilon^2=&\trace(\Sigma_{{\bar T}, 2} (B_2 B_2^T+2 P_{{\bar T}, M, 2} A_{21}^T))\\&+\trace(\Sigma_{{\bar T}, 1}F_{{\bar T}, 1}F_{{\bar T}, 
1}^T))-2\trace(P_{{\bar T},M} G_{{\bar T}, 1}^TG_{{\bar T}} 
)+\trace(P_{{\bar T}, r}G_{{\bar T}, 1}^T G_{{\bar T},1})\\&-\trace(\Sigma_{{\bar T}, 1} (F_{{\bar T}, 1}-F_{{\bar T}, r})(F_{{\bar T}, 1}-F_{{\bar 
T}, r})^T)
          \end{align*}}
  which is the claimed result.
\end{proof}
\end{thm}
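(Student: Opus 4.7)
The plan is to start from the first representation of $\epsilon^2$ in Theorem~\ref{thm_basic} and progressively fold each of its three trace terms into the desired expression using the partitioned Lyapunov and Sylvester equations, together with the fact that the time-limited balancing simultaneously diagonalizes $P_{\bar T}$ and $Q_{\bar T}$. The first step is a routine rewrite: via cyclicity of the trace and the integral representation, $\trace(C P_{\bar T} C^T) = \trace(B^T Q_{\bar T} B)$; inserting the balancing identity $S^{-T} Q_{\bar T} S^{-1} = \Sigma_{\bar T}$ and the block partition of $S B$ turns this into $\trace(B_1^T \Sigma_{\bar T,1} B_1) + \trace(B_2^T \Sigma_{\bar T,2} B_2)$. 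A similar row-block splitting $S P_{\bar T, M} = [P_{\bar T,M,1};\, P_{\bar T,M,2}]$ converts $\trace(C P_{\bar T, M} C_1^T)$ into the sum $\trace(C_1 P_{\bar T,M,1} C_1^T) + \trace(C_2 P_{\bar T,M,2} C_1^T)$.

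The decisive manoeuvre is to eliminate $C_1^T C_2$ from the second trace via the off-diagonal block equation~(\ref{c1c2eqbt2}). This substitution pulls $\Sigma_{\bar T,2} P_{\bar T,M,2} A_{21}^T$ to the surface, which will become the lead term in the claim, together with residuals in $\Sigma_{\bar T,1} A_{12} P_{\bar T,M,2}$ and $G_{\bar T,1}^T G_{\bar T,2} P_{\bar T,M,2}$. I next use the upper block of the mixed Sylvester equation~(\ref{mixeqbt2blsa}), premultiplied by $S$, to replace $A_{12} P_{\bar T,M,2}$ by $-(A_{11} P_{\bar T,M,1} + P_{\bar T,M,1} A_{11}^T + B_1 B_1^T - F_{\bar T,1} F_{\bar T,r}^T)$, and close the loop with~(\ref{firstc1c2bt2jk}) to express $\trace(\Sigma_{\bar T,1}(A_{11} P_{\bar T,M,1} + P_{\bar T,M,1} A_{11}^T))$ in terms of $G_{\bar T,1}^T G_{\bar T,1} - C_1^T C_1$. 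This is exactly the cancellation that erases the unwanted $C_1 P_{\bar T,M,1} C_1^T$ contribution.

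For the remaining pair $\trace(C_1 P_{\bar T,r} C_1^T) - \trace(B_1^T \Sigma_{\bar T,1} B_1)$, I introduce the auxiliary reduced observability Gramian $Q_{\bar T,r}$ satisfying~(\ref{redgramobsbt2fzu}), so that $\trace(C_1 P_{\bar T,r} C_1^T) = \trace(B_1^T Q_{\bar T,r} B_1)$. Combining~(\ref{firstc1c2bt2jk}) and~(\ref{redgramobsbt2fzu}) yields the difference equation~(\ref{difQRSig}) for $Q_{\bar T,r} - \Sigma_{\bar T,1}$; pairing it with $P_{\bar T,r}$ and using~(\ref{nonbalregramp}) rewrites the difference as $\trace(P_{\bar T,r}(G_{\bar T,1}^T G_{\bar T,1} - G_{\bar T,r}^T G_{\bar T,r})) + \trace(F_{\bar T,r} F_{\bar T,r}^T (Q_{\bar T,r} - \Sigma_{\bar T,1}))$. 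A short integral computation, based on the commutation $\expn^{A_{11} s} \expn^{A_{11} \bar T} = \expn^{A_{11} \bar T} \expn^{A_{11} s}$, gives the symmetric identity $\trace(P_{\bar T,r} G_{\bar T,r}^T G_{\bar T,r}) = \trace(F_{\bar T,r} F_{\bar T,r}^T Q_{\bar T,r})$, so all $Q_{\bar T,r}$ contributions cancel and only $\trace(P_{\bar T,r} G_{\bar T,1}^T G_{\bar T,1}) - \trace(F_{\bar T,r} F_{\bar T,r}^T \Sigma_{\bar T,1})$ survives.

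Finally, to match the form of the claim I apply the polarization identity $2\langle \Sigma_{\bar T,1}^{1/2} F_{\bar T,r},\, \Sigma_{\bar T,1}^{1/2} F_{\bar T,1}\rangle_F = \|\Sigma_{\bar T,1}^{1/2} F_{\bar T,r}\|_F^2 + \|\Sigma_{\bar T,1}^{1/2} F_{\bar T,1}\|_F^2 - \|\Sigma_{\bar T,1}^{1/2}(F_{\bar T,1} - F_{\bar T,r})\|_F^2$ to the cross term $2\trace(\Sigma_{\bar T,1} F_{\bar T,1} F_{\bar T,r}^T)$ accumulated in the previous step; this exposes the desired $\trace(F_{\bar T,1} F_{\bar T,1}^T \Sigma_{\bar T,1})$ and the correction $-\trace((F_{\bar T,1}-F_{\bar T,r})(F_{\bar T,1}-F_{\bar T,r})^T \Sigma_{\bar T,1})$ and cancels the leftover $F_{\bar T,r} F_{\bar T,r}^T \Sigma_{\bar T,1}$. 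I expect the main obstacle to be the sheer bookkeeping: roughly half a dozen trace identities must be orchestrated so that everything not appearing in the target expression cancels pairwise. The conceptually non-routine ingredients are the introduction of the auxiliary $Q_{\bar T,r}$ (which does not appear in the statement) and the symmetric identity $\trace(P_{\bar T,r} G_{\bar T,r}^T G_{\bar T,r}) = \trace(F_{\bar T,r} F_{\bar T,r}^T Q_{\bar T,r})$, without which the $Q_{\bar T,r}$-dependent terms would not cancel and the bound could not be reduced to a form involving the truncated time-limited singular values alone.
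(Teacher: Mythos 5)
Your proposal follows exactly the same route as the paper's proof: the same block selections from the partitioned observability Lyapunov equation, the same use of the upper block of the mixed Sylvester equation, the same auxiliary Gramian $Q_{\bar T, r}$ with the difference equation and the symmetric identity $\trace(P_{{\bar T}, r}G_{{\bar T}, r}^T G_{{\bar T},r})=\trace(F_{{\bar T}, r} F_{{\bar T}, r}^TQ_{{\bar T}, r})$, and the same final polarization step. The argument is correct and complete in all essential respects.
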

We now discuss the impact of the remainder term $R_{\bar T}:=-2\trace(G^T_{{\bar T}, 1} G_{\bar T} P_{{\bar T}, M})+\trace(G^T_{{\bar T}, 1} G_{{\bar 
T}, 1} P_{{\bar T}, 
r})+\trace(F_{{\bar T}, 1} F^T_{{\bar T}, 1} 
\Sigma_{{\bar T}, 1})$ of the error bound in Theorem \ref{thmmaimn}. Every summand of $R_{\bar T}$ can be bounded from above as follows: 
\begin{align*}
\trace(G^T_{{\bar T}, 1} G_{\bar T} P_{{\bar T}, M})&\leq \left\| G_{{\bar T}, 1}\right\|_F \left\| G_{{\bar T}}\right\|_F \left\| P_{{\bar T}, 
M}\right\|_F,   \\
\trace(F_{{\bar T}, 1} F^T_{{\bar T}, 1} \Sigma_{{\bar T}, 1})&= \left\|\Sigma_{{\bar T}, 1}^{\frac{1}{2}} F_{{\bar T}, 1}\right\|^2_F\leq \left\| 
F_{{\bar T}, 
1}\right\|^2_F 
\trace(\Sigma_{{\bar T},1}),\\
\trace(G^T_{{\bar T}, 1} G_{{\bar T}, 1} P_{{\bar T}, r})&= \left\|P_{{\bar T}, r}^{\frac{1}{2}} G^T_{{\bar T}, 1}\right\|^2_F\leq \left\| G_{{\bar 
T}, 1}\right\|^2_F 
\trace(P_{{\bar T},r}).
\end{align*}
If $A$ is asymptotically stable, then the norms $\left\| F_{{\bar T}, 1}\right\|_F$,$\left\| G_{{\bar T}, 1}\right\|_F$ and $\left\| G_{{\bar 
T}}\right\|_F$ 
decay exponentially fast, i.e., they are bounded by $c_1\expn^{-c_2{\bar T}}$, where $c_1, c_2>0$ are suitable constants.\smallskip

Now, if the terminal time $\bar T$ is sufficiently large, the term $R_{\bar T}$ is small and hence it can be neglected in the error bound. For very 
stable systems ($c_2$ is large), $\bar T$ can be chosen small and for slowly decaying systems (small constant $c_2$), $\bar T$ needs to be large in 
order to have a sufficiently small $R_{\bar T}$. If the remainder term $R_{\bar T}$ is small, it can be concluded from Theorem \ref{thmmaimn} that 
TLBT works well if the truncated time-limited singular values $\sigma_{{\bar T}, r+1}, \ldots, \sigma_{{\bar T}, n}$ are small.\smallskip

For non-stable systems the remainder term $R_{\bar T}$ in the error bound is expected to be large (exponential growth) which might be an 
indicator for a large error when applying TLBT to these systems.
\begin{bem}
The representation in Theorem \ref{thmmaimn} is not appropriate to determine the error bound since $B_2$ and $A_{21}$ are never computed in practice. 
However, for asymptotically stable systems \eqref{lti} ($R_{\bar T}$ is expected to be small) we know that the reduced order dimension $r$ has to be 
chosen such that $\sigma_{{\bar T}, r+1}, \ldots, \sigma_{{\bar T}, n}$ are small in order to guarantee a good approximation. Consequently, looking 
at the time-limited singular values instead of computing the error bound~\eqref{implicdeptilp} provides an alternative way to find a suitable reduced 
order dimension.
\end{bem}
\section{Practical Considerations}
Here we review the practical execution of TLBT for large-scale systems and evaluate the usefulness of the error bound~\eqref{implicdeptilp} in actual
computations. Directly solving the Lyapunov equations~\eqref{BT_Lyap}, \eqref{TLBT_Lyap} is infeasible for large dimensions.
Therefore, for large-scale systems it has become common practice to approximate the Gramians by low-rank factorizations, e.g.,
$P_{\infty}\approx Z_{\infty}Z_{\infty}^T$ with low-rank factors $Z_{\infty}\in\mathbb{R}^{n\times h}$, rank$(Z_{\infty})=h\ll n$, and similarly for 
the other
Gramians. This is justified by the often observed and proven fast singular value decay of solutions of Lyapunov equations~\cite{Gra04}, especially if 
$p,m\ll
n$. For this situation there exist efficient algorithms~\cite{BenS13,Sim16} employing techniques from sparse numerical linear algebra for computing 
the low-rank
solution factors. For the Lyapunov equations~\eqref{TLBT_Lyap} in TLBT, a rational Krylov subspace method~\cite{DruS11} is proposed in~\cite{morKue17}
that is also able to deal with the arising matrix exponentials. With low-rank approximations $P_{\bar T}\approx Z_{P_{\bar T}}Z_{P_{\bar T}}^T$, 
$Q_{\bar
T}\approx
Z_{Q_{\bar T}}Z_{Q_{\bar T}}^T$, one computes the SVD $X\Sigma Y^T=Z_{P_{\bar T}}^TZ_{Q_{\bar T}}$ and projection matrices
$V=Z_{P_{\bar T}} Y_1\Sigma_1^{-\tfrac{1}{2}}$ and $W:=Z_{Q_{\bar T}}X_1\Sigma_1^{-\tfrac{1}{2}}$, where $\Sigma_1$ contains the largest $r$ singular 
values and
$X_1,Y_1$ the associated singular vectors.
The reduced order model $\boldsymbol{\Sigma}_r$ is obtained via $A_{11}:=W^TAV$, $B_1:=W^TB$, $C_1:=CV$ which makes it clear that some of the 
quantities of the
bound in Theorem~\ref{thmmaimn} are not accessible in practical computations.

However, we may nevertheless acquire an approximation of~\eqref{implicdeptilp}. For this $\trace \left(C P_{\bar T}C^T\right)$ can be approximated by 
$\trace
\left(CZ_{P_{\bar T}}^T Z_{P_{\bar T}}C^T\right)$,
$\trace \left(C_1 P_{\bar T,r}C_1^T\right)$ requires solving the $r$ dimensional Lyapunov equation~\eqref{nonbalregramp}, and $\trace \left(C P_{\bar
T,M}C_1^T\right)$
requires the solution of the Sylvester equation~\eqref{mixeqbt2blsa}, which amounts to solve $r$ linear systems of equations defined by $A-\alpha I$,
$\alpha\in\Lambda(A_{11})$ see, e.g.,~\cite[Algorithm~7.6.2]{GolV13}. 
Unlike the error bound in BT~\eqref{BT_errorB}, the TLBT bound~\eqref{implicdeptilp} cannot be easily used to adjust the reduced order because when 
changing $r$
to, say, $r+d$, $d\geq 1$,
the solutions of~\eqref{lyapeq_btbound} have to be computed entirely from scratch. Especially because of the Sylvester equation~\eqref{mixeqbt2blsa}, 
this would
be increasingly expensive.

TLBT can with minor adjustments be applied to generalized state-space systems
\begin{align}\label{glti}
\boldsymbol{\Sigma}:\quad E\dot x(t)=Ax(t)+Bu(t),\quad x(0)=0,\quad y(t)=Cx(t)
\end{align}
with $E$ nonsingular. In that case the time-limited Gramians are $P_{\bar T}$, $E^TQ_{\bar T}E$, where $P_{\bar T}$, $Q_{\bar T}$ solve the
generalized Lyapunov equations
\begin{align}\label{TLBT_GLyap}
\begin{split}
 AP_{\bar T}E^T+EP_{\bar T}A^T+BB^T-F^E_{\bar T}(F^E_{\bar T})^T&=0,\\
A^TQ_{\bar T}E+E^TQ_{\bar T}A^T+C^TC-(G^E_{\bar T})^TG^E_{\bar T}&=0
\end{split}
\end{align}
with $F^E_{t}:=E\expn^{E^{-1}A t}E^{-1}B$ and $G^E_{t}:=C\expn^{E^{-1}At}$, see~\cite{morKue17}. Hence, the derivations of Section~\ref{sec2} can be 
carried out
as before by using the quantities in \eqref{TLBT_GLyap}. In particular, in the constant in the bound~\eqref{implicdeptilp},
$P_{\bar T,M}$ has to be replaced by the solution $P^E_{\bar T,M}$ of 
\begin{align*}
 AP^E_{\bar T,M}+EP^E_{\bar T,M}A_{11}+B\tilde B_1-F^E_{\bar T}(F^E_{{\bar T}, r})^T=0,
\end{align*}
where $SE^{-1}B=\begin{bmatrix}\tilde{B}_1\\ \tilde{B}_2\end{bmatrix}$, $F^E_{{\bar T}, r}:=\expn^{A_{11}\bar T}\tilde B_1$. Here we employed that the 
mass
matrix $E$ is transformed to the identity in (TL)BT. The transformation matrices $V,W$ for TLBT are constructed as before but using the SVD $X\Sigma
Y^T=Z_{P_{\bar T}}^TEZ_{Q_{\bar T}}$, where $Z_{P_{\bar T}},~Z_{Q_{\bar T}}$ are low-rank solution factors of~\eqref{TLBT_GLyap}.
\section{Numerical Experiments}
All following computations are
carried out in \matlab~8.0.0.783 on a \intel\xeon CPU X5650 (2.67GHz, 48 GB RAM).  
We use the rail model from the Oberwolfach benchmark collection\footnotemark[1] which represents a finite element discretization of a cooling process 
of a steel
rail. \footnotetext[1]{http://portal.uni-freiburg.de/imteksimulation/downloads/benchmark}
It provides symmetric positive and negative definite matrices $M$ and, respectively, $A$, as well as $B\in\mathbb{R}^{n\times 7}$, 
$C\in\mathbb{R}^{6\times n}$.
We begin
with the coarsest discretization level with $n=1357$ which still allows to compute the matrix exponentials and Lyapunov solutions by direct methods.
The final time is $\bar T=100$, the input chosen as $u(t)=50\mathbf{1}_7$ ($\mathbf{1}_h:=[1,\ldots,1]^T\in\mathbb{R}^h$), and the time integration is 
carried
out using an implicit midpoint rule until $T=400$ with a fixed time step $\delta t=0.04$. We generate reduced order models of dimension $r=40$ by both 
BT and
TLBT. Figure~\ref{fig:rail1_error} shows the obtained errors $\|y(t)-y_r(t)\|_2$ and the bound~\eqref{implicdeptilp}, clearly indicating that the 
proposed
bound is valid. Of course, after leaving $[0,\bar T]$, \eqref{implicdeptilp} is no longer valid and 
$\|y(t)-y_r(t)\|_2>\epsilon\left\|u\right\|_{L^2_{\bar T}}$
for some $t>\bar T$.
We also see that ordinary BT provides less accurate reduced order models. It is important to point out that almost identical results were obtained
if low-rank Gramian approximations computed by rational Krylov subspace methods~\cite{DruS11,morKue17} are used. In particular, running the method for 
the
restricted Gramians with the same settings as in~\cite{morKue17} led to $\vert \epsilon^{\text{approx.}}-\epsilon^{\text{exact}}\vert\approx 1.6\cdot 
10^{-9}$
and visually indistinguishable error norms $\|y(t)-y_r(t)\|_2$. 

\begin{figure}[t]
  \centering
  \includegraphics{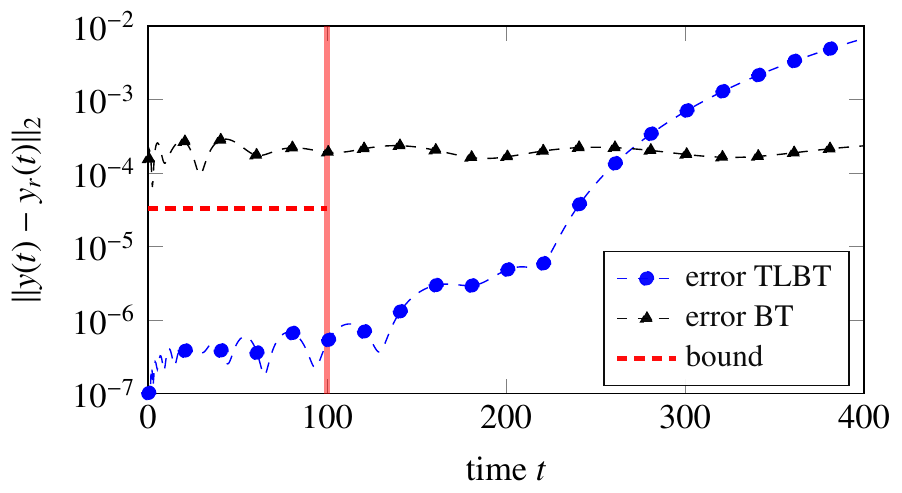}
  \caption{Results obtained by BT and TLBT for small rail model ($n=1357$, $\bar T=100$, $u(t)=50\mathbf{1}_7$, $r=40$).}\label{fig:rail1_error}
\end{figure}

We continue by investigating the influence of the final time $\bar T$ and the reduced order
$r$ to $\max\limits_{t\in [0, {\bar T}]}\|y(t)-y_r(t)\|_2$ and~\eqref{implicdeptilp}. 
The results are visualized in Figure~\ref{fig:rail1_errvs_rT}. For the top plot we fixed $\bar T=100$ and varied the reduced order
$r=10,\ldots,100$. Apparently, TLBT achieves smaller
errors than BT for increasing $r$. After some value of $r$, the bound~\eqref{implicdeptilp} appears to stagnate and fails to capture the decreasing 
behavior of
the error.
The bottom plot shows the results for a fixed  $r=50$ but different final times $\bar T=50,\ldots,300$ which for TLBT requires, naturally, computing
(approximations of) the matrix exponentials and $P_{\bar T},~Q_{\bar T}$ for each value of $\bar T$.  The results indicate that increasing $\bar T$ 
also 
increases the achieved error and the bound~\eqref{implicdeptilp} appears to capture this behavior.
As investigated for TLBT in~\cite{morKue17}, for even larger final times $\bar T$, TLBT will at some point produce errors which are very close to 
those of BT.
\begin{figure}[t]
  \centering
\includegraphics{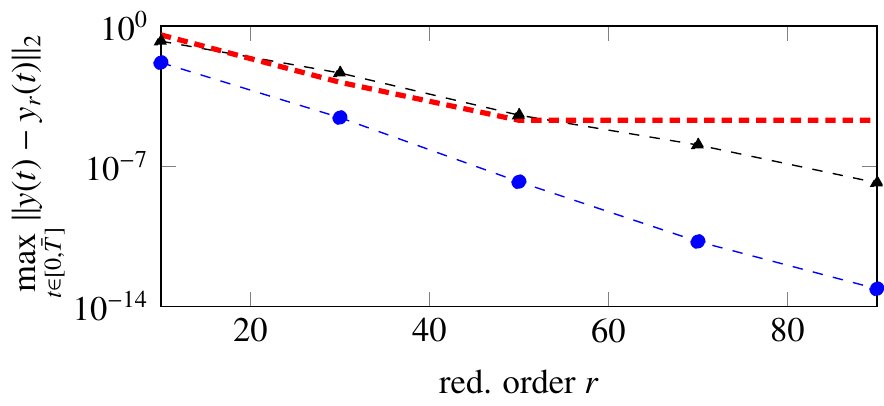} 
\\
\quad\includegraphics{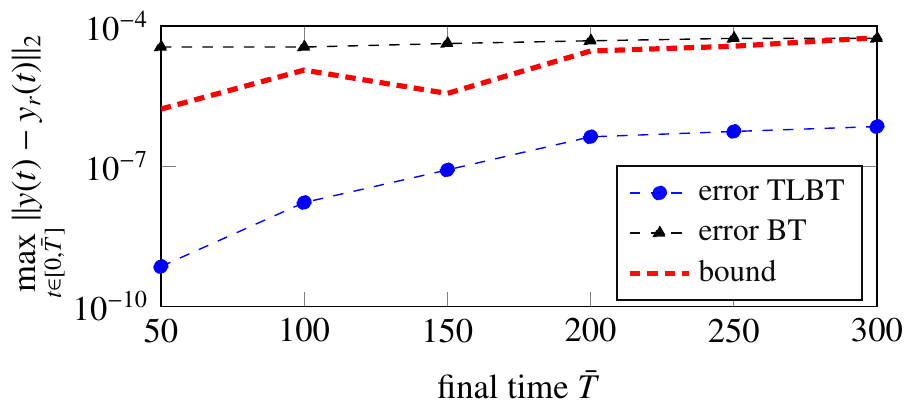}
  \caption{Influence of $r$ (top) and $\bar T$ (bottom) for small rail model.}
  \label{fig:rail1_errvs_rT}
\end{figure}

Next we experiment with a larger version of the rail model with $n=79841$. This size requires using low-rank solution factors of the Gramians.
We set $u(t)=u_*(t):=[\sin(4t\pi/100),\cos(t\pi/100),3,\expn^{-2t},\cos(t/100)\expn^{-t},\tfrac{1}{1+t^2},\tfrac{1}{1+\sqrt{t}}]^T$ and $\bar T=150$. 
Motivated by Theorem~\ref{thmmaimn}, we experiment with an automatic determination of the reduced order $r$ s.t. $\sum_{i=r+1}^{\hat n}\sigma_{i,\bar
T}\leq \tau$ for some specified tolerance $0<\tau\ll 1$ and $\hat n:=\min(\text{rank}(Z_{P_{\bar T}}),\text{rank}(Z_{Q_{\bar T}}))$, i.e., similar as 
in 
unrestricted BT. The obtained reduced orders $r$ in BT and TLBT, as well as the largest errors in $[0,\bar T]$ and~\eqref{implicdeptilp} are shown in
Figure~\ref{fig:rail79_errvstau} against different values $\tau=10^{-7},\ldots,10^{-2}$.

 TLBT again achieves smaller errors than BT and approximately two orders of magnitude smaller than $\tau$. Note that the obtained reduced orders $r$ 
of TLBT are
for $\tau=10^{-4},10^{-3},10^{-2}$ slightly larger than those of BT.
This experiment nevertheless suggests that choosing the order $r$ in TLBT automatically by looking at the time-limited singular values is as reliable 
as in BT.
\begin{figure}[t]
  \centering
 \includegraphics{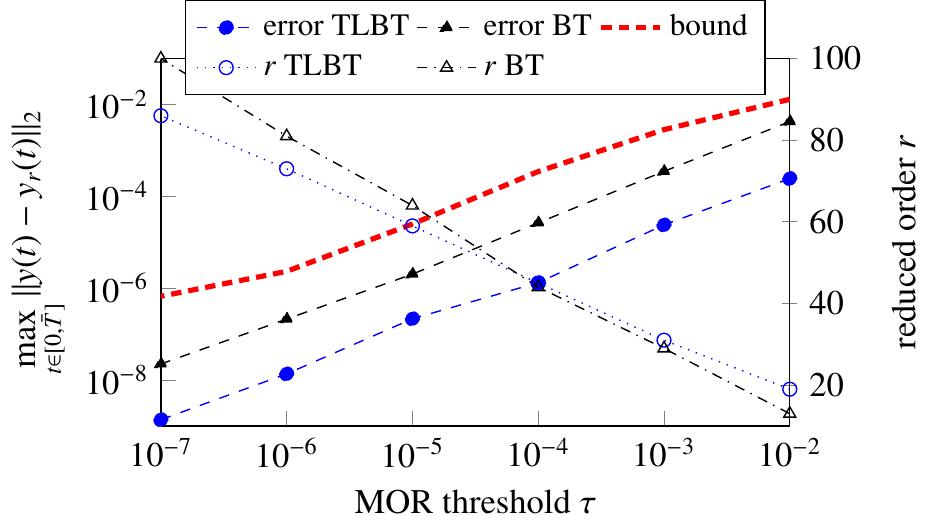}
  \caption{Automatically adjusted orders $r$, maximum errors, bound ~\eqref{implicdeptilp} against tolerances $\tau$ for the larger rail model  
($n=79841$,
$\bar T=150$, $u(t)=u_*(t)$).}
  \label{fig:rail79_errvstau}
\end{figure}
\section{Conclusion}
In this paper, we have studied time-limited balanced truncation, an alternative to conventional balanced truncation. This scheme can outperform the 
conventional ansatz when seeking for a good reduced order model on a certain finite time interval but, so far, no theory on error bounds has 
been established. Therefore, we proved an $\mathcal H_2$ error bound in this work. We provided two different representations for the bound. One is 
appropriate for practical computations, whereas the other one shows that the time-limited singular values can be used as well in order to 
determine a suitable reduced order dimension. This paper also contains numerical experiments in which we presented the performance of the error bound.

\section*{Acknowledgements}
The authors thank the organizers of the LMS-EPSRC Durham Symposium on Model Order Reduction. The stimulating atmosphere during this meeting has 
resulted in the development of the ideas behind this paper. Moreover, the authors thank Peter Benner for his helpful comments.

\bibliographystyle{plain}

\end{document}